\numberwithin{equation}{section} \numberwithin{equation}{section}
\newcommand{\RNum}[1]{\uppercase\expandafter{\romannumeral #1\relax}}
\newtheorem{definition}{Definition}
\newtheorem{ex}{Example}
\newtheorem{theorem}{Theorem}
\newtheorem{proposition}{Proposition}
\newtheorem{remark}{Remark}
\numberwithin{equation}{section}
\begin{document}
	
\title{Solving Linear Systems over Tropical Semirings through Normalization Method and Its Applications}

\author{Fateme Olia}

\address{Fateme Olia, Faculty of Mathematics, K. N. Toosi University of Technology, Tehran, Iran}
\email{folya@mail.kntu.ac.ir}

\author{Shaban Ghalandarzadeh}

\address{Shaban Ghalandarzadeh, Faculty of Mathematics, K. N. Toosi University of Technology, Tehran, Iran}
\email{ghalandarzadeh@kntu.ac.ir}

\author{Amirhossein Amiraslani}

\address{Amirhossein Amiraslani,STEM Department, The University of Hawaii-Maui College, Kahului, Hawaii, USA. Faculty of Mathematics, K. N. Toosi University of Technology, Tehran, Iran}
\email{aamirasl@hawaii.edu}		
\author{Sedighe Jamshidvand}

\address{Sedighe Jamshidvand, Faculty of Mathematics, K. N. Toosi University of Technology, Tehran, Iran}
\email{sjamshidvand@mail.kntu.ac.ir}

	\begin{abstract}
		In this paper, we introduce and analyze a normalization method for solving a system of linear equations over tropical semirings. We use a normalization method to construct an associated normalized matrix, which gives a technique for solving the system. If solutions exist, the method can also determine the degrees of freedom of the system. Moreover, we present a procedure to determine the column rank and the row rank of a matrix. Flowcharts for this normalization method and its applications are included as well.
	\end{abstract}
	\maketitle 
	{\small {\it Key words}: Tropical semiring; system of linear equations; column rank; row rank}\\[0.3cm]
	{\bf Mathematics Subject Classification 2010:} 16Y60, 65F05, 15A03, 15A06.

\section{Introduction}
Systems of linear equations play a fundamental role in numerical simulations and formulization of mathematics and physics problems. Solving these systems is among the important tasks of linear algebra. There are numerous applications of linear  systems  over tropical semirings in various areas of mathematics, engineering, computer science, optimization theory, control theory, etc. (see e.g. \cite{Aceto}, \cite{Gondran},\cite{krivulin1}, \cite{krivulin2},\cite{McEneaney} ). As such, we intend to present a method for examining the behavior of linear systems and solving them if possible.\\
The algebraic structure of semirings are similar to rings, but subtraction and division can not necessarily be defined for them. The first notion of a semiring was given by Vandiver \cite{vandiver} in 1934. A semiring $(S,+,.,0,1)$ is an algebraic structure in which $(S, +)$ is a commutative monoid with an identity element $0$ and $(S,.)$ is a monoid with an identity element 1, connected by ring-like distributivity. The additive identity $0$ is multiplicatively absorbing, and $0 \neq 1$. Note that for convenience, we mainly consider $S=(\mathbb{R} \cup \{-\infty\}, max, +, -\infty, 0)$ which is a well-known tropical semiring called ``$\max-\rm plus$ algebra" in this work. Other examples of tropical semirings, which are isomorphic to ``$\max-\rm plus$ algebra", are ``$\max-\rm times$ algebra", ``$\min-\rm times$ algebra "and ``$\min-\rm plus$ algebra". \\
Letting $ S $ be a tropical semiring, we want to solve the system $ AX=b $, where $ A=(a_{ij}) \in M_{m\times n}(S) $, $ b \in S^{m} $ and $ X $ is an unknown vector of size $ n $. To this end, we present a necessary and sufficient condition based on the associated normalized matrix, which is obtained from a proposed normalization method. Furthermore, if the system $ AX=b $ has solutions, we use the associated normalized matrix to determine the degrees of freedom of the system. Determining the column rank and the row rank of a matrix is of particular interest in studying the behavior of matrices. As a result of the normalization method, we are able to find the column rank and the row rank of matrices over tropical semirings.\\
In section~3, by introducing the normalization method, we can construct the associated normalized matrix of a linear system that provides useful information about this system. As an extremely important result, we present a necessary and sufficient condition on the associated normalized matrix to determine the existence of solutions of the system. Additionally, we introduce an equivalent relation over matrices that implies the associated normalized matrix of a linear system and each of its equivalent systems should be the same. As such, the solvability of a linear system and its equivalent system depend on each other.\\
Section~4 concerns a descriptive method for finding the degrees of freedom under a step-by-step process on the associated normalized matrix.\\
In section~5, we determine the column rank of a matrix and the row rank as the column rank of the matrix transpose by the normalization method. In fact, we must investigate the dependence of each column on other columns of the matrix. To this end, we first solve a linear system, where the system vector can be each column of the matrix and other columns form the system matrix. The solvability of this system means that the system vector is linearly dependent and it must be removed. We repeat this process for each column. \\
Through row-column analysis, we can remove linearly dependent rows and columns of the system matrix to obtain the reduced system with fewer equations and unknowns. It is shown that the solvability of a linear system and its corresponding reduced system depend on each other. Moreover, if a linear system has a solution, then its corresponding reduced system can accelerate the computation of degrees of freedom, which is defined as the number of free variables of the system. See section~6 for more details.\\
In the appendix of this paper, we give some flowcharts as follows. Figure~1 shows the normalization process of a matrix (or vector) in max-plus. Figure~2 gives the method for solving a system of linear equations and finding its degrees of freedom in max-plus. Finally, Figure~3 is about finding the column rank of a given matrix in max-plus.
\section{Definitions and Preliminaries}
In this section, we give some definitions and preliminaries. For convenience, we use $ \mathbb{N} $ and $\underline{n}$ to denote the set of all positive integers and the set $\{ 1,2,\cdots,n\}$ for $n \in \mathbb{N}$, respectively.
\begin{definition}
	A semiring $ (S,+,., 0,1)$ is an algebraic system consisting of a nonempty set $ S $ with two binary operations,  addition and multiplication, such that the following conditions hold:
	\begin{enumerate} 
		\item{$ (S, +) $ is a commutative monoid with identity element $ 0 $};
		\item{ $ (S, \cdot) $ is a monoid with identity element $ 1 $};
		\item{ Multiplication distributes over addition from either side, that is $ a(b+c)= ab+ac $ and $ (b+c)a=ba+ca $ for all $ a, b \in S $};
		\item{ The neutral element of $ S $ is an absorbing element, that is $ a\cdot 0 =0= 0 \cdot a  $ for all $ a \in S $};			\item{ $ 1 \neq 0 $}.
	\end{enumerate}
	A semiring is called commutative if $ a\cdot b = b \cdot a $ for all $ a, b \in S $.
\end{definition}
In this work, we primarily focus on tropical semiring $\mathbb{R}_{\max, +}:=(\mathbb{R}\cup \{ -\infty \}, \max, +, -\infty, 0)$, which is called ``$\max-\rm plus$ algebra" whose additive and multiplicative identities are $-\infty$ and $0$, respectively. Moreover, the notation $a-b$ in ``$\max-\rm plus$ algebra" is equivalent to $ a + (-b)  $, where $ ``-" ,~``+" $ and $ -b $ denote the usual real numbers subtraction, addition and the typical additively inverse of the element $ b $, respectively.
\begin{definition}(See \cite{golan})
	Let $ S $ be a semiring. A left $ S $-semimodule is a commutative monoid $ (\mathcal{M},+) $ with identity element $ 0_{\mathcal{M}}$ for which we have a scalar multiplication function $S \times \mathcal{M} \longrightarrow \mathcal{M} $, denoted by $ (s, m)\mapsto sm $, which satisfies the following conditions for all $ s, s^{\prime} \in S $ and $ m, m^{\prime} \in \mathcal{M} $:
	\begin{enumerate}
		\item{$ (ss^{\prime})m=s(s^{\prime}m) $ }; 
		\item{$ s(m+m^{\prime})= sm +sm^{\prime}$};
		\item{$ (s+s^{\prime})m=sm +s^{\prime}m $};
		\item{$ 1_{S}m=m$};
		\item{$ s0_{\mathcal{M}}=0_{\mathcal{M}}=0_{S}m$}.
	\end{enumerate}
	Right semimodules over $ S $ are defined in an analogous manner. 
\end{definition}
\begin{definition}
	A nonempty subset $ \mathcal{N} $ of a left $ S$-semimodule $ \mathcal{M} $ is a subsemimodule of $ \mathcal{M}$ if $ \mathcal{N} $ is closed under addition and scalar multiplication. Note that this implies $ 0_{\mathcal{M}} \in \mathcal{N} $. Subsemimodules of right semimodules are defined analogously. 
\end{definition}
\begin{definition}
	Let $ \mathcal{M} $ be a left $ S $-semimodule and $ \{ \mathcal{N}_{i} \vert i \in \Omega \} $ be a family of subsemimodules of $ \mathcal{M} $. Then $ \displaystyle{\bigcap_{i \in \Omega}}\mathcal{N}_{i}$ is a subsemimodule of $ \mathcal{M} $ which, indeed, is the largest subsemimodule of $ \mathcal{M} $ contained in each of the $ \mathcal{N}_{i}$. In particular, if $ \mathcal{A} $ is a subset of a left $ S $-semimodule $ \mathcal{M} $ then the intersection of all subsemimodules of $ \mathcal{M} $ containing $ \mathcal{A} $ is a subsemimodule of $ \mathcal{M} $, called the subsemimodule generated by $ \mathcal{A} $. This subsemimodule is denoted by
	$$ S\mathcal{A}=Span(\mathcal{A}) = \{\displaystyle{\sum_{i=1}^{n}}s_{i}\alpha_{i}~ \vert~ s_{i} \in S, \alpha_{i} \in \mathcal{A},i \in \underline{n}, n \in \mathbb{N}  \}.$$
	If $  \mathcal{A}$ generates all of the semimodule $ \mathcal{M} $ , then $\mathcal{A} $ is a set of generators for $ \mathcal{M} $. Any set of generators for $ \mathcal{M} $ contains a minimal set of generators. A left $ S $-semimodule having a finite set of generators is finitely generated. Note that the expression $ \displaystyle{\sum_{i=1}^{n}}s_{i}\alpha_{i} $ is a linear combination of the elements of $ \mathcal{A} $.
\end{definition}
\begin{definition}(See \cite{taninner})
	Let $\mathcal{M}$ be a left $ S $-semimodule. A nonempty subset $ \mathcal{A}$ of $\mathcal{M} $ is called linearly independent if $ \alpha \notin Span(\mathcal{A} \setminus \{ \alpha \}) $ for any $ \alpha \in \mathcal{A} $. If $\mathcal{A}$ is not linearly independent then it is called linearly dependent.
\end{definition}
\begin{definition}
	The rank of a left $ S $-semimodule $ \mathcal{M} $ is the smallest $ n $ for which there exists a set of generators  of $ \mathcal{M} $ with cardinality $ n $. It is clear that  $ rank(\mathcal{M})$ exists for any finitely generated left $ S $-semimodule $ \mathcal{M} $. \\
	This rank need not be the same as the cardinality of a minimal set of generators for $ \mathcal{M} $, as the following example shows. 
\end{definition}
\begin{ex}
	Let $S $ be a semiring and $\mathcal{R} =S \times S $ be the Cartesian product of two copies of $ S $. Then $ \{ (1_{S},1_{S}) \}$ and $ \{ (1_{S}, 0_{S}), (0_{S},1_{S}) \}$ are both minimal sets of generators for $ \mathcal{R} $, considered as a left semimodule over itself with componentwise addition and multiplication. Hence, $ rank(\mathcal{R})= 1 $.
\end{ex}
Let $S$ be a commutative semiring. We denote the set of all $m \times n$ matrices over $S$ by $M_{m \times n}(S)$.  
For $A \in M_{m \times n} (S)$, we denote by $a_{ij}$ and $A^{T}$ the $(i,j)$-entry of $A$ and the transpose of $A$, respectively.\\
For any $A, B \in M_{m \times n}(S)$, $C \in M_{n \times l}(S)$ and $\lambda \in S$, we define:
$$ A+B = (a_{ij}+b_{ij})_{m \times n},$$
$$ AC=(\sum_{k=1}^{n} a_{ik}b_{kj})_{m \times l},$$
and $$\lambda A=(\lambda a_{ij})_{m\times n}.$$
Clearly, $ M_{m \times n}(S) $ equipped with matrix addition and matrix scalar multiplication is a left $ S $-semimodule.
It is easy to verify that $M_{n}(S):=M_{n \times n}(S)$ forms a semiring with respect to the matrix addition and the matrix multiplication.
The above matrix operations over $ \max-\rm plus$ algebra can be considered as follows.
$$A+B = (\max(a_{ij},b_{ij}))_{m \times n},$$ 
$$AC=(\max_{k=1}^{n} (a_{ik}+b_{kj}))_{m \times l},$$ 
and $$\lambda A=(\lambda + a_{ij})_{m\times n}.$$
For convenience, we can denote the scalar multiplication $ \lambda A $ by $ \lambda + A $. Moreover, $ \max-\rm plus$ algebra is a commutative semiring, which implies $ \lambda + A= A +\lambda $.
\begin{definition}
	Let $A, B \in M_{n}(S)$ such that $A=(a_{ij})$ and $B=(b_{ij})$. We say $A\leq B$ if and only if $a_{ij} \leq b_{ij}$ for every $i\in \underline{m}$ and $j \in \underline{n}$. 
\end{definition}
\begin{definition}(See \cite{wilding})
	Let $ S $ be a semiring and $A \in M_{m \times n}(S) $. The column space of $ A $ is the finitely generated right $ S $-subsemimodule of $M_{m \times 1}(S) $ generated by the columns of $A$:
	$$ Col(A)=\{ Av \vert v \in M_{n \times 1}(S) \}. $$
	The column rank of $A $ is the rank of its column subsemimodule, which is denoted by $colrank(A)$.
\end{definition}
\begin{definition}(See \cite{wilding})
	Let $ S $ be a semiring and $A \in M_{m \times n}(S) $. The row space of $ A $ is the finitely generated left $ S $-subsemimodule of $M_{1 \times n}(S) $ generated by the rows of $A$:
	$$ Row(A)=\{ uA \vert u \in M_{1 \times m}(S) \}. $$
	The row rank of $A $ denoted by $rowrank(A)$ is the rank of its row subsemimodule.
\end{definition}
The next example shows that the column rank and the row rank of a matrix over an arbitrary semiring are not necessarily equal. If these two value  coincide, their common value is called the rank of matrix $ A $.
\begin{ex}
	Consider $A \in M_{3}(S) $ where $ S=\mathbb{R}_{\max,+}$ as follows.
	\[
	A=\left[
	\begin{array}{ccc}
	3&6&5\\
	-5&0&-2\\
	4&1&6
	\end{array}
	\right].
	\]
	Clearly, $ rowrank(A)=3 $, but $colrank(A)=2$, since  the third column of $ A $ is a linear combination of its other columns:
	$$ A_{3}=\max(A_{1}+ 2,A_{2}+(-2)).$$
\end{ex}
Next, we study and analyze the system of linear equations $ AX=b $ where $A \in M_{m\times n}(S) $, $ b \in S^{m} $ and  $ X $ is an unknown column vector of size $n$ over tropical semiring $ S=\mathbb{R}_{\max,+}$, whose $i-$th equation is
$$\max(a_{i1}+ x_{1}, a_{i2} +x_{2}, \cdots, a_{in} +x_{n})=b_{i}. $$
Sometimes, we can simplify the solution process of the system, $ AX=b $, by turning that into a linear system of equations with fewer equations and variables. 
\begin{definition}
	Let $A \in M_{m \times n}(S) $. A reduced matrix is obtained from matrix $ A $ by removing its  dependent rows and columns which we denote by $ \overline{A}$. 
\end{definition}
\begin{definition}
	A solution $X^{*}$ of the system $AX=b$ is called maximal, if $X \leq X^{*}$ for any solution $X$.
\end{definition}
\begin{definition}
	A vector $b\in S^{m}$ is called  regular if $ b_{i} \neq -\infty $ for any $i \in \underline{m}$.
\end{definition}
Without loss of generality, we can assume that $b$ is regular in the system $AX=b$. Otherwise, let $b_{i}=-\infty $ for some $i \in \underline{n}$. Then in the $i-$th equation of the system, we have $a_{ij} + x_{j}= -\infty $ for any $j \in \underline{n}$. As such, $x_{j}= -\infty $ if $a_{ij}\neq -\infty $. Consequently, the $i-$th equation can be removed from the system together with every column $A_{j}$ where $a_{ij} \neq -\infty $, and the corresponding $x_{j}$ can be set to $-\infty $.
\begin{definition}
	Let the linear system of equations $ AX=b $ have solutions. Suppose that $ A_{j_{1}}, A_{j_{2}}, \cdots, A_{j_{k}} $ are linearly independent columns of $ A $, and $ b $ is a linear combination of them. Then the corresponding variables, $ x_{j_{1}}, x_{j_{2}}, \cdots, x_{j_{k}} $ , are called leading variables and other variables are called free variables of the system $ AX= b$. \\
	The degrees of freedom of the linear system  $ AX=b $, denoted by $ \mathcal{D}_{f}$, is the number of free variables. 
\end{definition}
\section{Solving a System of Linear Equations through the Normalization Method}
In this section, we introduce a method, which we call the normalization method, for solving a system of linear equations.
Consider the system of linear equations $AX=b$, where $ A=(a_{ij})\in M_{m \times n}(S) $, $ b=(b_{i}) $ is a regular $m-$vector over $ S $ and $ X $  is an unknown $ n-$vector. Let the $ j $-th column of the matrix $ A $ be denoted by $ A_{j} $ . 
\begin{definition}\label{normmethod} \textbf{(Normalization Method)} Let $ A \in M _{m \times n}(S) $ and $A_{j} \in S^{m} $ be a regular vector for any $ j \in \underline{n} $ which means the matrix $ A $ does not contain any element $ -\infty $. Then the normalized matrix of $ A $ is denoted by
	\begin{align*}
	\tilde{A}=
	\left[
	\begin{array}{c|c|c|c}
	A_{1}- \hat{A}_{1}&A_{2} - \hat{A}_{2}&\cdots&A_{n} - \hat{A}_{n}
	\end{array}
	\right], 
	\end{align*}
	where $ \hat{A}_{j}= \frac{a_{1j}+ a_{2j} + \cdots + a_{mj}}{m}$ for every $ j \in \underline{n}$.\\
	Similarly, the normalized vector of the regular vector $ b\in S^{m} $ is 
	$$\tilde{b} = b - \hat{b},$$ 
	where  $ \hat{b}= 
	\frac{b_{1}+ b_{2} + \cdots + b_{m}}{m} $.\\
	As such, we can rewrite the system $ AX=b $ as the normalized system $ \tilde{A}Y=\tilde{b}$, where $ Y=(\hat{A}_{j} - \hat{b})+X=(\hat{A}_{j} - \hat{b} +x_{j})_{j=1}^{n} $, as follows.
	\begin{align*}
	AX=b
	&\ \Rightarrow\max(A_{1} + x_{1}, A_{2} + x_{2}, \cdots, A_{n} + x_{n} ) =b \\
	&\ \Rightarrow \max((A_{1}- \hat{A}_{1} )+\hat{A}_{1} + x_{1},( A_{2}- \hat{A}_{2} )+\hat{A}_{2} + x_{2}, \cdots , (A_{n}- \hat{A}_{n} )+\hat{A}_{n}+ x_{n} ) =(b -\hat{b}) +\hat{b}\\
	&\ \Rightarrow \max(\tilde{A}_{1}+\hat{A}_{1} + x_{1},\tilde{A}_{2}+\hat{A}_{2} + x_{2}, \cdots ,\tilde{A}_{n} +\hat{A}_{n}+ x_{n} ) =\tilde{b} +\hat{b}\\ 
	&\ \Rightarrow \max(\tilde{A}_{1}+(\hat{A}_{1}- \hat{b} + x_{1}),\tilde{A}_{2}+(\hat{A}_{2}- \hat{b} + x_{2}), \cdots ,\tilde{A}_{n} +(\hat{A}_{n} - \hat{b}+ x_{n}) ) =\tilde{b}\\ 
	&\ \Rightarrow \max(\tilde{A}_{1}+y_{1},\tilde{A}_{2}+y_{2}, \cdots ,\tilde{A}_{n} +y_{n} ) =\tilde{b}\\
	&\ \Rightarrow \tilde{A}Y=\tilde{b}
	\end{align*}
	Hence $ y_{j} \leq \tilde{b}_{i} - \tilde{a}_{ij} $ for every $ i \in \underline{m}$ and $j \in \underline{n} $. Now, we define the associated normalized matrix $ Q= (q_{ij})\in M_{m\times n}(S) $ where $ q_{ij}=\tilde{b}_{i} -\tilde{a}_{ij} $ . We choose $y_{j} $ as the minimum element of $Q_{j}$ (the $j$-th column of $Q$), which we call the ``$j$-th column minimum element".\\
	It should be noted that if $ a_{ij} = -\infty $ for some $ i \in \underline{m} $ and $ j \in \underline{n} $, then we will not count $ a_{ij} $ in the normalization process of column $ A_{j} $, i.e. 
	$$ \hat{A}_{j}=\frac{a_{1j}+ a_{2j} + \cdots + a_{(i-1)j}+ a_{(i+1)j}+ \cdots + a_{mj}}{m-1}. $$ 
	As such, $ \tilde{a}_{ij}= -\infty $ and  we set $ q_{ij}:=(-\infty)^{-} $ such that $ s < (-\infty)^{-} $ for any $ s \in S $. Thus, $ q_{ij}$ does not affect the $j-$th column minimum element. Consequently and without loss of generality, we assume that every column of the system matrix is regular.
\end{definition}
\begin{remark}
	Normalization method is a useful computational method in which rewriting the system $ AX=b $ as the normalized system $ \tilde{A}Y=\tilde{b} $ provides an appropriate criterion to compare the entries of each column. For instance, in the system $ \tilde{A}Y=\tilde{b} $, the multiplication of all nonzero elements in every column of matrices $ \tilde{A}$ and $\tilde{b} $ is equal to $ 1 $. In max-plus algebra, it means $ \displaystyle{\sum_{i=1}^{m} \tilde{a}_{ij}} = 0$ and $\displaystyle{\sum_{i=1}^{m} \tilde{b}_{i}} = 0 $, for any $ \tilde{a}_{ij}, \tilde{b}_{i} \neq -\infty $ and $ 1 \leq i, j \leq n $ with $ A, \tilde{A} \in M_{m \times n}(S) $. Note further that by constructing the associated normalized matrix $ Q $ as given by Definition~\ref{normmethod} and determining its column minimum elements, some comprehensive information about the existence of solutions of the system $ AX=b $, solving this system and every equivalent system to $ AX=b $ is obtained from the matrix $ Q $. The normalization method can also be used to determine the degrees of freedom of a solvable system as well as the column rank and the row rank of a given matrix.
\end{remark}
In the next theorem, we give a necessary and sufficient condition for solving the system $AX=b$ such that the matrices $ A, Q$ and the vector $ b $ are defined as above.
\begin{theorem}\label{normthm}
	The linear system of equations $ AX=b$ has solutions if and only if there exists at least one column minimum element in every row of $Q$.
\end{theorem}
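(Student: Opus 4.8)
The plan is to work entirely with the normalized system $\tilde{A}Y=\tilde{b}$. The substitution $y_j=\hat{A}_j-\hat{b}+x_j$ displayed in Definition~\ref{normmethod} is a bijection between $n$-vectors $X$ and $n$-vectors $Y$ (its inverse is $x_j=y_j-\hat{A}_j+\hat{b}$), so $AX=b$ has a solution if and only if $\tilde{A}Y=\tilde{b}$ does; hence it suffices to prove the stated criterion for $\tilde{A}Y=\tilde{b}$. Recall that the $i$-th equation of the normalized system reads $\max_{j\in\underline{n}}(\tilde{a}_{ij}+y_j)=\tilde{b}_i$, and, by the standing assumption that every column of $A$ (hence of $\tilde{A}$) is regular and that $b$ is regular, each $q_{ij}=\tilde{b}_i-\tilde{a}_{ij}$ is a genuine real number.

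First I would record the one inequality that comes for free. If $Y$ is any solution of $\tilde{A}Y=\tilde{b}$, then from $\tilde{a}_{ij}+y_j\le\tilde{b}_i$ we get $y_j\le q_{ij}$ for all $i,j$, so $y_j\le y_j^{*}:=\min_{i\in\underline{m}}q_{ij}$, the $j$-th column minimum element. Thus $Y^{*}=(y_1^{*},\dots,y_n^{*})$ dominates every solution componentwise, so it is the only possible maximal solution; moreover $\tilde{a}_{ij}+y_j^{*}\le\tilde{b}_i$ for all $i,j$, whence $\max_{j}(\tilde{a}_{ij}+y_j^{*})\le\tilde{b}_i$ automatically for every row $i$.

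For the ``if'' direction, suppose every row of $Q$ contains a column minimum element: for each $i$ there is a column index $j=j(i)$ with $q_{ij}=\min_{k}q_{kj}=y_j^{*}$. Then $\tilde{a}_{ij}+y_j^{*}=\tilde{a}_{ij}+q_{ij}=\tilde{b}_i$, so together with the free inequality of the previous paragraph we get $\max_{j}(\tilde{a}_{ij}+y_j^{*})=\tilde{b}_i$ for every $i$; hence $Y^{*}$ solves $\tilde{A}Y=\tilde{b}$, and undoing the substitution yields a solution of $AX=b$. For the ``only if'' direction, let $Y$ be a solution, fix a row $i$, and pick $j$ attaining the maximum in the $i$-th equation, so $\tilde{a}_{ij}+y_j=\tilde{b}_i$, i.e. $y_j=q_{ij}$. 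Since $q_{ij}=y_j\le y_j^{*}=\min_{k}q_{kj}\le q_{ij}$, all of these coincide, so $q_{ij}$ is the column minimum element of column $j$ and it lies in row $i$. As $i$ was arbitrary, every row of $Q$ contains a column minimum element.

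I do not expect a genuine obstacle here; the content is just the careful bookkeeping of the direction of the max-plus inequalities and the fact that the change of variables is a bijection. The one point that must be handled cleanly is the degenerate case $\tilde{a}_{ij}=-\infty$, where $q_{ij}$ is set to $(-\infty)^{-}$ and therefore can never be a column minimum element; this is precisely why Definition~\ref{normmethod} reduces the discussion to regular columns, so that all the comparisons above take place between ordinary real numbers.
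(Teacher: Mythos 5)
Your proof is correct and follows essentially the same route as the paper: both hinge on the observation that any solution of $\tilde{A}Y=\tilde{b}$ satisfies $y_j\le q_{ij}$, so the vector of column minima is the maximal candidate, and solvability is equivalent to each row attaining equality at some column minimum. The only cosmetic difference is that you argue the ``only if'' direction directly from an arbitrary solution, while the paper phrases it as a contradiction using the maximal candidate; the content is the same, and your explicit handling of the bijection $y_j=\hat{A}_j-\hat{b}+x_j$ and of the $(-\infty)$ case just makes the paper's implicit steps precise.
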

\begin{proof}
	Let the system $AX=b$ has solutions. Suppose the $i$-th row of $ Q $ has no column minimum element for some $i \in \underline{m}$. That is $ y_{j} < \tilde{b}_{i} - \tilde{a}_{ij} $ for every $ j \in \underline{n} $, therefore the $i$-th equation of the system $ \tilde{A}Y=\tilde{b} $ is 
	$$ \max(\tilde{a}_{i1} + y_{1}, \tilde{a}_{i2}+ y_{2}, \cdots,\tilde{a}_{in}+ y_{n} ) < \tilde{b}_{i}.$$
	Hence, the system $\tilde{A}Y=\tilde{b} $ and a fortiori the system $ AX=b $ have no solution, which is a contradiction.\\
	Conversely, suppose that every row of the matrix $ Q $ contains at least one column minimum element, so for any $ i \in \underline{m} $ there is some $ j \in \underline{n} $ such that $ y_{j}= \tilde{b}_{i} - \tilde{a}_{ij}$. Then 
	$$ \max(\tilde{a}_{i1} + y_{1}, \tilde{a}_{i2}+ y_{2}, \cdots,\tilde{a}_{ij}+ y_{j} ,\cdots,\tilde{a}_{in}+ y_{n} ) = \tilde{b}_{i} $$ for every $ i \in \underline{m}$. Thus, the system $ \tilde{A}Y=\tilde{b} $ and consequently the system $AX=b $ have solutions.  
\end{proof}
\begin{remark}
	The solution of the system $ AX=b $ that is obtained from Theorem~\ref{normthm} is maximal.
\end{remark}
The next example shows that the condition of Theorem~\ref{normthm} is a sufficient condition for a linear system of equations to have solutions.
\begin{ex}\label{normex}
	Let $ A \in M_{4 \times 5}(S) $. Consider the following system $ AX=b $:
	\[
	\left[
	\begin{array}{ccccc}
	165&57&72&-7&0\\
	141&64&48&3&-1\\
	137&101&46&0&2\\
	-243&98&-206&156&-5
	\end{array}
	\right]
	\left[
	\begin{array}{c}
	x_{1}\\
	x_{2}\\
	x_{3}\\
	x_{4}\\
	x_{5}
	\end{array}
	\right]=
	\left[
	\begin{array}{c}
	102\\
	78\\
	76\\
	160
	\end{array}
	\right].	
	\]
	By Definition~\ref{normmethod}, the system $ AX= b $ is rewritten as the normalized system $ \tilde{A}Y=\tilde{b}$:
	\[
	\left[
	\begin{array}{ccccc}
	115&-23&82&-45&1\\
	91&-16&58&-35&0\\
	87&21&56&-38&3\\
	-293&18&-196&118&-4
	\end{array}
	\right]
	\left[
	\begin{array}{c}
	y_{1}\\
	y_{2}\\
	y_{3}\\
	y_{4}\\
	y_{5}
	\end{array}
	\right]=
	\left[
	\begin{array}{c}
	-2\\
	-26\\
	-28\\
	56
	\end{array}
	\right].	
	\]
	Note that the $ j $-th column of $ \tilde{A} $ is $\tilde{A}_{j}=(a_{ij} - \hat{A}_{j})_{i=1}^{4}$, for any $ 1 \leq j \leq 5 $ and $ \tilde{b}=(b_{i} -\hat{b})_{i=1}^{4} $, where $ \hat{A}_{1}= 50,~ \hat{A}_{2}= 80,~ \hat{A}_{3}= -10,~ \hat{A}_{4}= 38,~ \hat{A}_{5}= -1,~ \hat{b}= 104 $. Now, we can build the matrix $ Q= (q_{ij})\in M_{4 \times 5}(S) $, with $ q_{ij}= \tilde{b}_{i}- \tilde{a}_{ij}$ as follows.
	\[
	\left[
	\begin{array}{ccccc}
	\boxed{-117}&21&\boxed{-84}&43&-3\\
	\boxed{-117}&-10&\boxed{-84}&9&-26\\
	-115&\boxed{-49}&\boxed{-84}&10&\boxed{-31}\\
	349&38&252&\boxed{-62}&60
	\end{array}
	\right];
	\]
	where the minimum column elements are boxed. Since every row of $ Q $ contains at least one of these minimum column elements, due to Theorem~\ref{normthm}, the system $ \tilde{A}Y= \tilde{b} $ has the maximal solution $ Y^{*} $:
	\[ 
	Y^{*}=\left[
	\begin{array}{c}
	-117\\
	-49\\
	-84\\
	-62\\
	-31
	\end{array}
	\right]
	\] 
	Hence, the system $ AX=b $ has the maximal solution $ X^{*} $:
	\[
	X^{*}=\left[
	\begin{array}{c}
	-63\\
	-25\\
	30\\
	4\\
	74
	\end{array}
	\right];
	\]
	where $ x^{*}_{j}= y^{*}_{j} - \hat{A_{j}} + \hat{b} $, for any $ 1 \leq j \leq 5 $.
\end{ex}
The following example shows that the condition of Theorem~\ref{normthm} is  necessary. 
\begin{ex}
	Let $ A \in M_{5 \times 4}(S) $. Consider the following system $ AX=b $:
	\[
	\left[
	\begin{array}{cccc}
	0&-1&2&7\\
	1&5&4&-2\\
	-2&5&0&2\\
	4&-3&1&2\\
	-3&8&2&-6
	\end{array}
	\right]
	\left[
	\begin{array}{c}
	x_{1}\\
	x_{2}\\
	x_{3}\\
	x_{4}
	\end{array}
	\right]=
	\left[
	\begin{array}{c}
	3\\
	3\\
	0\\
	-6\\
	2
	\end{array}
	\right].	
	\]
	By Definition~\ref{normmethod}, the normalized system $ \tilde{A}Y= \tilde{b} $	corresponding to the system $ AX= b $ is as follows:
	\[
	\left[
	\begin{array}{cccc}
	0&-\frac{19}{5}&\frac{1}{5}&\frac{32}{5}\\
	1&\frac{11}{5}&\frac{11}{5}&-\frac{13}{5}\\
	-2&\frac{11}{5}&-\frac{9}{5}&\frac{7}{5}\\
	4&-\frac{29}{5}&-\frac{4}{5}&\frac{7}{5}\\
	-3&\frac{26}{5}&\frac{1}{5}&-\frac{33}{5}
	\end{array}
	\right]
	\left[
	\begin{array}{c}
	y_{1}\\
	y_{2}\\
	y_{3}\\
	y_{4}
	\end{array}
	\right]=
	\left[
	\begin{array}{c}
	\frac{13}{5}\\
	\frac{13}{5}\\
	-\frac{2}{5}\\
	-\frac{32}{5}\\
	\frac{8}{5}
	\end{array}
	\right],	
	\]
	where $ \hat{A}_{1}=0,~ \hat{A}_{2}= \frac{14}{5},~ \hat{A}_{3}= \frac{9}{5},~ \hat{A}_{4}= \frac{3}{5},~ \hat{b}= \frac{2}{5} $. Obviously, some rows of the following matrix $ Q= (q_{ij})\in M_{5 \times 4}(S) $, with $ q_{ij}= \tilde{b}_{i}- \tilde{a}_{ij}$ contain no column minimum element: 
	\[
	Q=\left[
	\begin{array}{cccc}
	\frac{13}{5}&\frac{32}{5}&\frac{12}{5}&-\frac{19}{5}\\
	\frac{8}{5}&\frac{2}{5}&\frac{2}{5}&\frac{26}{5}\\
	\frac{8}{5}&-\frac{13}{5}&\frac{7}{5}&-\frac{9}{5}\\
	\boxed{-\frac{52}{5}}&-\frac{3}{5}&\boxed{-\frac{28}{5}}&\boxed{-\frac{39}{5}}\\
	\frac{23}{5}&\boxed{-\frac{18}{5}}&\frac{7}{5}&\frac{41}{5}
	\end{array}
	\right].
	\]
	As such, by Theorem~\ref{normthm}, the system $ AX=b $ has no solution. Indeed, considering $ Y=(-\frac{52}{5}, -\frac{18}{5}, -\frac{28}{5}, -\frac{39}{5})^{T} $ implies $ X=(-10, -6,-7, -8 )^{T} $, but it is not a solution of the system $ AX=b $. For instance, in the first equation of the system by replacing $ X $ we have:
	$$ \max(a_{11} + (-10), a_{12} +(-6), a_{13} + (-7), a_{14} +(-8) )= -1 \neq b_{1} $$ 
\end{ex}
\subsection{Solving equivalent systems of linear equations}
\begin{definition}
	Let  $A, A^{\prime} \in M_{m\times n}(S)$.We say $A$ is equivalent to $A^{\prime}$ if there exist nonzero coefficients $\alpha_{1}, \alpha_{2}, \cdots, \alpha_{n} \in S $ such that $A^{\prime}_{j}=A_{j}+ \alpha_{j}$ for any $j \in \underline{n}$, and we write 
	\begin{center}
		$ A \sim A^{\prime} \Longleftrightarrow  A^{\prime}= [ A_{1}+\alpha_{1}| \cdots | A_{n}+\alpha_{n} ]$ 
	\end{center}
	for some $\alpha_{1}, \alpha_{2}, \cdots, \alpha_{n} \in S \backslash \{ -\infty\}$.\\
	The equivalence class of $A$ is defined as follows.
	\begin{center}
		$[A]= \{ A^{\prime} \in M_{m \times n}(S) | A \sim A ^{\prime}\}$
	\end{center}
	Note that this equivalence relation also holds for vectors.
\end{definition}
In the next theorem, we prove that the solvability of the equivalent systems depend on each other.
\begin{theorem}\label{equthm}
	Let $A \in M_{m \times n}(S)$ and $b \in S^{m}$ be a regular vector. Then the system $AX=b$ has solutions if and only if the equivalent system $A^{\prime}X^{\prime}= b^{\prime}$ has solutions for any $A^{\prime} \in [A]$ and $b ^{\prime} \in [b]$.
\end{theorem}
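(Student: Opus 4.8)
The plan is to show that the equivalence relation of the preceding definition leaves the associated normalized matrix $Q$ completely unchanged, after which the theorem follows immediately from Theorem~\ref{normthm}. First I would fix $A' \in [A]$ and $b' \in [b]$, so that $A'_j = A_j + \alpha_j$ for some finite scalars $\alpha_1, \dots, \alpha_n \in S \setminus \{-\infty\}$ and $b' = b + \beta$ for some finite $\beta$. Since $b$ is regular and $\beta \neq -\infty$, the vector $b'$ is regular; likewise each column $A'_j$ is regular whenever $A_j$ is, so the normalization method of Definition~\ref{normmethod} is genuinely defined for the system $A'X' = b'$.

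Next I would compute the column averages of the primed data. In max-plus the $j$-th column average of $A'$ is $\hat{A}'_j = \frac{1}{m}\sum_{i=1}^m a'_{ij} = \frac{1}{m}\sum_{i=1}^m (a_{ij} + \alpha_j) = \hat{A}_j + \alpha_j$, and similarly $\hat{b}' = \hat{b} + \beta$. Hence $\tilde{A}'_j = A'_j - \hat{A}'_j = (A_j + \alpha_j) - (\hat{A}_j + \alpha_j) = \tilde{A}_j$ and $\tilde{b}' = b' - \hat{b}' = (b + \beta) - (\hat{b} + \beta) = \tilde{b}$. The scalar shifts cancel exactly because normalization subtracts the column mean. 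Therefore the associated normalized matrices agree entrywise, $q'_{ij} = \tilde{b}'_i - \tilde{a}'_{ij} = \tilde{b}_i - \tilde{a}_{ij} = q_{ij}$, i.e. $Q' = Q$; in particular the positions of the column minimum elements in $Q'$ and $Q$ are identical.

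Finally I would invoke Theorem~\ref{normthm} twice: the system $AX = b$ has solutions if and only if every row of $Q$ contains at least one column minimum element, which holds if and only if every row of $Q' = Q$ does, which holds if and only if $A'X' = b'$ has solutions. As $A' \in [A]$ and $b' \in [b]$ were arbitrary (and the choice $A' = A$, $b' = b$ recovers the trivial instance), this establishes the equivalence in both directions at once.

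I do not anticipate a serious obstacle here; the argument is essentially the observation that $Q$ is an invariant of the equivalence class. The only points that need a line of care are (i) verifying that regularity of $b$ and of the columns of $A$ is preserved under the finite shifts, so that the normalized system on the primed side exists, and (ii) noting that the degenerate convention $q_{ij} := (-\infty)^-$ is also preserved, since $\tilde{a}'_{ij} = -\infty$ exactly when $\tilde{a}_{ij} = -\infty$.
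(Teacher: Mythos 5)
Your proposal is correct and follows essentially the same route as the paper: compute $\hat{A}'_j=\hat{A}_j+\alpha_j$ and $\hat{b}'=\hat{b}+\beta$, conclude $Q'=Q$, and apply Theorem~\ref{normthm} in both directions. The extra remarks on preservation of regularity and of the $(-\infty)^{-}$ convention are harmless additions beyond the paper's argument.
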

\begin{proof}
	Suppose $AX=b$ has solutions. By theorem~\ref{normthm}, every row of its associated normalized matrix, $Q=(q_{ij})$, contains at least one column minimum element, where
	\begin{center}
		$q_{ij}= \tilde{b_{i}}- \tilde{a}_{ij}= (b_{i}-\hat{b})- (a_{ij}- \hat{A}_{j}).$ 
	\end{center}
	On the other hand, since $A^{\prime}=(a^{\prime}_{ij})\in [A]$ and $b^{\prime}=(b^{\prime}_{i}) \in [b]$, there exist coefficients $\alpha_{1},\alpha_{2},\cdots, \alpha_{n},\beta \in S \backslash \{ -\infty\}$ such that $a^{\prime}_{ij}=a_{ij}+\alpha_{j}$ and $b^{\prime}_{i}=b_{i}+\beta$. Now, consider the associated normalized matrix $Q^{\prime}=(q^{\prime}_{ij})$ of the system $ A^{\prime}X^{\prime}= b^{\prime} $ such that
	\begin{align*}
	q^{\prime}_{ij}= \tilde{b^{\prime}}_{i} - \tilde{a^{\prime}}_{ij}
	&\ = (b^{\prime}-\hat{b^{\prime}})- (a^{\prime}_{ij}- \hat{A^{\prime}}_{j})\\
	&\ = (b_{i}+\beta - \hat{b^{\prime}})-(a_{ij}+\alpha_{j}- \hat{A^{\prime}}_{j})\\
	&\ = (b_{i}-\hat{b})-(a_{ij}-\hat{A}_{j})~~~~~~~~~~~~~~~~~~~~~~~~~~~~~~~~~~~~~~~~~~~~~~(3.1)\\
	&\ = q_{ij},
	\end{align*}
	for any $ i \in \underline{m} $ and $ j \in \underline{n} $. It should be noted that the equality $(3.1)$ is obtained from:\\
	\begin{align*}
	\hat{b^{\prime}}=\frac{b^{\prime}_{1}+\cdots+b^{\prime}_{m}}{m}
	&\ = \frac{(b_{1}+\beta)+\cdots+(b_{m}+\beta)}{m}\\
	&\ = \frac{( b_{1}+\cdots+b_{m})}{m}+ \beta\\
	&\ = \hat{b}+\beta
	\end{align*}
	and 
	\begin{align*}
	\hat{A^{\prime}}_{j}= \frac{a^{\prime}_{1j}+\cdots+a^{\prime}_{mj}}{m}
	&\ = \frac{(a_{1j}+\alpha_{j})+\cdots+(a_{mj}+\alpha_{j})}{m}\\
	&\ = \frac{a_{1j}+\cdots+a_{mj}}{m}+ \alpha_{j}\\
	&\ = \hat{A}_{j}+\alpha_{j}.
	\end{align*}
	This means $ Q=Q^{\prime}$ and consequently, the column minimum elements of $Q$ and $Q^{\prime} $ are the same. Hence, the proof is complete. Similarly, we can prove the converse.
\end{proof}
\begin{remark}
	The proof of Theorem~\ref{equthm} shows that the associated normalized matrix of the system $ AX=b $ and each of its equivalent systems are the same. As such, if the system $ AX=b $ has the solution $ X=(x_{j})_{j=1}^{n} $, then we can find the solution of the equivalent system $A^{\prime}X^{\prime}=b^{\prime}$ as $ X^{\prime}=(x^{\prime}_{j})_{j=1}^{n}$ with $ x^{\prime}_{j}= x_{j} +\beta - \alpha_{j} $ for any $ j \in \underline{n}$.
\end{remark}
\section{Determining the Leading and Free Variables by the Normalization Method}
Consider the system $ AX=b $ and the matrix $ Q $ as given by Definition~\ref{normmethod}. Suppose that the system $ AX=b $ has solutions. Then by theorem~\ref{normthm}, any row of the matrix $ Q $ contains at least one column minimum element. If there exists exactly one  column minimum element in some rows of $ Q $ that is on the $ j$-th column of $ Q $, then $ y_{j} $ must be a leading variable of the system $\tilde{A}Y=\tilde{b} $. As such, the corresponding variable $x_{j} $ is a leading variable of the system $ AX=b $.
\begin{proposition}
	Let $ AX=b $ and the matrix $ Q $ be given by Definition~\ref{normmethod}. Suppose that the system $ AX=b $ has solutions and $ k $ is the number of the rows of $ Q $ which contain exactly one column minimum element in different columns. Then the following statements hold:
	\begin{enumerate}
		\item{ If $ k=0 $, then $ \mathcal{D}_{f} \leq n-1 $ }
		\item{ If $ k\neq 0$, then $ \mathcal{D}_{f} \leq n-k $ }
	\end{enumerate}
\end{proposition}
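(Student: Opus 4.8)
The plan is to reduce both statements to the single assertion that the number of leading variables of $AX=b$ is at least $\max\{1,k\}$: since by definition $\mathcal{D}_{f}=n-(\text{number of leading variables})$, the inequality $\mathcal{D}_{f}\le n-\max\{1,k\}$ gives (1) when $k=0$ and (2) when $k\ne 0$. Throughout, let $X^{*}=(x^{*}_{j})_{j=1}^{n}$ denote the maximal solution of $AX=b$, which exists by Theorem~\ref{normthm} and the remark following it, and recall from Definition~\ref{normmethod} that $y^{*}_{j}=\hat{A}_{j}-\hat{b}+x^{*}_{j}$ is the $j$-th column minimum element of $Q$.

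First I would translate the ``boxed entry'' condition on $Q$ into a statement about $X^{*}$. Using $\tilde{a}_{ij}=a_{ij}-\hat{A}_{j}$ and $\tilde{b}_{i}=b_{i}-\hat{b}$ one checks $\tilde{a}_{ij}+y^{*}_{j}=a_{ij}+x^{*}_{j}-\hat{b}$, so that $q_{ij}$ is a column minimum element of $Q_{j}$ if and only if $a_{ij}+x^{*}_{j}=b_{i}$. Consequently a row $i$ of $Q$ contains exactly one column minimum element, lying in some column $j(i)$, if and only if in the $i$-th equation the value $b_{i}=\max_{1\le j\le n}(a_{ij}+x^{*}_{j})$ is attained uniquely at $j=j(i)$. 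Let $J\subseteq\underline{n}$ be the set of all such indices $j(i)$; the hypothesis ``in different columns'' means precisely that $|J|=k$.

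The key step is to show $J\subseteq\{j_{1},\dots,j_{r}\}$ for \emph{any} columns $A_{j_{1}},\dots,A_{j_{r}}$ of $A$ with $b=\sum_{l=1}^{r}c_{l}A_{j_{l}}$; applying this to a witnessing set for the leading variables ($r=$ number of leading variables, $\mathcal{D}_{f}=n-r$) then yields $r\ge|J|=k$. Indeed, the vector $\hat{X}$ defined by $\hat{x}_{j_{l}}=c_{l}$ for $l\in\underline{r}$ and $\hat{x}_{j}=-\infty$ otherwise satisfies $A\hat{X}=b$, hence $\hat{X}\le X^{*}$ by maximality, i.e. $c_{l}\le x^{*}_{j_{l}}$ for all $l$. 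Then for a row $i$ with $j(i)\in J$,
$$b_{i}=\max_{1\le l\le r}\bigl(a_{ij_{l}}+c_{l}\bigr)\ \le\ \max_{1\le l\le r}\bigl(a_{ij_{l}}+x^{*}_{j_{l}}\bigr)\ \le\ \max_{1\le j\le n}\bigl(a_{ij}+x^{*}_{j}\bigr)=b_{i},$$
so equality holds throughout; but $a_{ij_{l}}+x^{*}_{j_{l}}=b_{i}$ forces $j_{l}=j(i)$ by the uniqueness established above, so some $j_{l}$ equals $j(i)$, giving $J\subseteq\{j_{1},\dots,j_{r}\}$ and $\mathcal{D}_{f}=n-r\le n-k$. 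This is (2). For (1), when $k=0$ nothing is forced, but $b$ is regular and hence is not the zero vector $(-\infty,\dots,-\infty)$ of $S^{m}$; so $b$ cannot be expressed as an empty linear combination, which gives $r\ge 1$ and $\mathcal{D}_{f}=n-r\le n-1$.

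I expect the main obstacle to be a matter of care rather than genuine difficulty: one must make sure the bound does not depend on which linearly independent witnessing set $\{A_{j_{1}},\dots,A_{j_{r}}\}$ is used to define the leading variables. The displayed inequality chain is tailored exactly to this point — it shows that every expression of $b$ as a $\max$-plus linear combination of columns of $A$ must involve all the columns $j(i)\in J$, so no admissible choice of leading columns can omit them; note that linear independence of the $A_{j_{l}}$ is not even needed for this argument.
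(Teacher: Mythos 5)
Your proposal is correct. The paper itself offers no argument here (its proof reads ``The proof is obvious''), and the intended reasoning is only sketched informally in Step~1 of the descriptive method of Section~4: a row of $Q$ with a unique column minimum element forces the corresponding variable to be leading, so $k$ such rows in distinct columns force at least $k$ leading variables (and regularity of $b$ forces at least one when $k=0$). Your write-up supplies exactly the justification this sketch is missing, and does so cleanly: the translation $q_{ij}$ is a column minimum of $Q_j$ iff $a_{ij}+x^{*}_{j}=b_{i}$ is verified correctly from $\tilde a_{ij}=a_{ij}-\hat A_j$, $\tilde b_i=b_i-\hat b$, $y^{*}_{j}=\hat A_j-\hat b+x^{*}_{j}$, and the squeeze $b_i=\max_l(a_{ij_l}+c_l)\le\max_l(a_{ij_l}+x^{*}_{j_l})\le\max_j(a_{ij}+x^{*}_j)=b_i$, using $\hat X\le X^{*}$ from maximality, correctly forces every column of $J$ to appear in \emph{any} set of columns whose combination gives $b$. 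That last point is the real added value over the paper's sketch: since the paper's definition of leading variables depends on a choice of witnessing columns, your argument shows the bound $\mathcal{D}_f\le n-\max\{1,k\}$ holds for every admissible choice, not just a convenient one, and it does not even need linear independence of the witnessing columns. The $k=0$ case via regularity of $b$ (so $b$ is not the additive identity and an empty combination is impossible) is also fine.
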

\begin{proof}
	The proof is obvious.
\end{proof}
\subsection{A descriptive method for finding the number of degrees of freedom}
Let the non negative integer $ k $ be the number of the rows of $ Q $ containing exactly one column minimum element in different columns.
\begin{itemize}
	\item \textbf{Step 1.} First, we determine the rows of $ Q $ which contain exactly one column minimum element. We now consider the columns of $ Q $ where these column minimum elements are located. The corresponding variables of these columns are leading variables of the system $ \tilde{A}Y=\tilde{b}$. Hence, the system has at least $ k $ leading variables. For example, suppose that a row of $ Q $ contains exactly one column minimum element that is located in the $ j$-th column. Then $ y_{j} $ and consequently $ x_{j} $ are leading variables of the systems $ \tilde{A}Y=\tilde{b} $ and $ AX=b $, respectively. 
	\item \textbf{Step 2.} Next, we remove every row of $ Q $ containing exactly one column minimum element and determine their column indices. We then eliminate the rows of the matrix $ Q $ whose column minimum elements occur in the same column index as the rows containing exactly one column minimum element. 
	\item \textbf{Step 3.} In the remaining rows from Step 2, we select the column whose column minimum elements appear most frequently (say, the $l $-th column). We consider the corresponding variable to this column as the next leading variable ($ x_{l}$). We now remove all the rows including $ x_{l} $. 
	\item \textbf{Step 4.} We now repeat Step 3 and continue until we remove all the rows of $ Q $. Eventually, we can obtain the total number of leading variables and the degrees of freedom which satisfy the following equation $$ \mathcal{D}_{f}= n- (the~number~of~leading~variables)$$
\end{itemize}
In the following two examples, we apply the above method to find the number of degrees of freedom of solvable linear systems.
\begin{ex}
	Let $ A \in M_{4 \times 5}(S)$. Consider the following system $ AX=b$:
	\[
	\left[
	\begin{array}{ccccc}
	-4&7&12&-3&0\\
	3&2&8&3&-1\\
	-9&1&6&0&2\\
	2&8&-5&1&-3
	\end{array}
	\right]
	\left[
	\begin{array}{c}
	x_{1}\\
	x_{2}\\
	x_{3}\\
	x_{4}\\
	x_{5}
	\end{array}
	\right]=
	\left[
	\begin{array}{c}
	5\\
	10\\
	4\\
	9
	\end{array}
	\right].
	\]
	By Definition~\ref{normmethod}, the normalized system $ \tilde{A}Y=\tilde{b} $ corresponding to the system $ AX=b $ is 
	\[
	\left[
	\begin{array}{ccccc}
	-2&\frac{5}{2}&\frac{27}{4}&-\frac{13}{4}&\frac{1}{2}\\
	5&-\frac{5}{2}&\frac{11}{4}&\frac{11}{4}&-\frac{1}{2}\\
	-7&-\frac{7}{2}&\frac{3}{4}&-\frac{1}{4}&\frac{5}{2}\\
	4&\frac{7}{2}&-\frac{41}{4}&\frac{3}{4}&-\frac{5}{2}
	\end{array}
	\right]
	\left[
	\begin{array}{c}
	y_{1}\\
	y_{2}\\
	y_{3}\\
	y_{4}\\
	y_{5}
	\end{array}
	\right]=
	\left[
	\begin{array}{c}
	-2\\
	3\\
	-3\\
	2
	\end{array}
	\right],
	\]
	where $ \hat{A}_{1}= -2,~ \hat{A}_{2}= \frac{9}{2},~ \hat{A}_{3}=\frac{21}{4},~ \hat{A}_{4}=\frac{1}{4},~ \hat{A}_{5}= -\frac{1}{2},~ \hat{b}= 7 $. The following matrix  $ Q=( \tilde{b}_{i} -\tilde{a}_{ij}) \in M_{4 \times 5}(S) $ is obtained:
	\[
	\left[
	\begin{array}{ccccc}
	0&\boxed{-\frac{9}{2}}&\boxed{-\frac{35}{4}}&\frac{5}{4}&-\frac{5}{2}\\
	\boxed{-2}&\frac{11}{2}&\frac{1}{4}&\frac{1}{4}&\frac{7}{2}\\
	4&\frac{1}{2}&-\frac{15}{4}&\boxed{-\frac{11}{4}}&\boxed{-\frac{11}{2}}\\
	\boxed{-2}&-\frac{3}{2}&\frac{49}{4}&\frac{5}{4}&\frac{9}{2}
	\end{array}
	\right].
	\]
	Since every row of the matrix $ Q $ contains at least one column minimum element, by Theorem~\ref{normthm} the normalized system $\tilde{A}Y=\tilde{b} $ and consequently, the system $ AX=b $ have solutions. Through $ Q $, we can now implement the described method for finding the degrees of freedom of this system:
	\begin{itemize}
		\item \textbf{Step1.} The second and fourth rows of matrix $ Q $ contain exactly one column minimum element, which are both located in the first column. This means $ x_{1} $ is a leading variable of the system $ AX=b $ and therefore $ \mathcal{D}_{f} \leq 5- 1=4 $.
		\item \textbf{Step2.} We must remove every row of $ Q $, which contains the column minimum element in the first column. As a result, the second and fourth rows of $ Q $ are removed. Now, we consider the following submatrix of $ Q $ containing these remaining rows:
		\[
		Q_{r}=\left[
		\begin{array}{ccccc}
		0&\boxed{-\frac{9}{2}}&\boxed{-\frac{35}{4}}&\frac{5}{4}&-\frac{5}{2}\\
		4&\frac{1}{2}&-\frac{15}{4}&\boxed{-\frac{11}{4}}&\boxed{-\frac{11}{2}}
		\end{array}
		\right].
		\]
		\item \textbf{Step3.} Since the column minimum elements in the matrix $ Q_{r} $ have the same frequency, we have four options for the next leading variable. For example, let's consider $ x_{2} $ as a leading variable. Thus, we can remove the first row of $ Q_{r} $. As a result, $ \mathcal{D}_{f} \leq 5- 2 =3$.
		\item\textbf{Step4.} We repeat the process for the  second row of $ Q_{r} $, so the procedure is complete. Consequently, the system under investigation has three leading variables and the number of degrees of freedom is $ \mathcal{D}_{f} = 2 $.
	\end{itemize}
\end{ex}
\begin{ex}
	Consider the linear system $ AX=b $, given in Example~\ref{normex}. In order to find the degrees of freedom of the system $ AX= b $, we must use $ Q $:
	\[
	Q=\left[
	\begin{array}{ccccc}
	\boxed{-117}&21&\boxed{-84}&43&-3\\
	\boxed{-117}&-10&\boxed{-84}&9&-26\\
	-115&\boxed{-49}&\boxed{-84}&10&\boxed{-31}\\
	349&38&252&\boxed{-62}&60
	\end{array}
	\right];
	\]
	The fourth row of $ Q $ contains exactly one column minimum element which is located in the fourth column. $ x_{4} $ is therefore a leading variable of the system $ AX=b $ and the fourth row must be removed from $ Q $. In the remaining rows of $ Q $, the column minimum element in the third column $(-84) $ has the highest frequency, so we choose $ x_{3} $ as the next leading variable of the system $ AX= b $. We now remove every row of $ Q $ containing this column minimum element, so all the rows of $ Q $ are removed. Hence, the system $ AX=b $ has two leading variables and $ \mathcal{D}_{f}=3$. 
\end{ex}
\section{Determining the column rank by normalization method}
We consider the following arbitrary matrix $A$:
\[
A=
\left[
\begin{array}{c|c|c|c}
A_{1}&A_{2}&\cdots&A_{n}
\end{array}
\right],
\]
where $A_{j}$ is the $j$-th column of $A$.\\
We check the existence of solutions of the following system by the normalization method:
\begin{equation}
\label{eq1}
\left[
\begin{array}{c|c|c|c}
A_{1}&A_{2}&\cdots&A_{n-1}
\end{array}
\right]X=A_{n}.
\end{equation}
Here, we have two cases:\\
\begin{enumerate}
	\item If the system $(\ref{eq1})$ has no solution, we conclude that $A_{n}$ is an independent column of $A$. In this case, $A_{n}$ can not be removed from the set of generators of $ Col(A) $. As such, we consider the following system by setting $A_{n}$ as the first column of the coefficient matrix:
	\begin{equation}
	\label{eq2} 
	\left[
	\begin{array}{c|c|c|c|c}
	A_{n}&A_{1}&A_{2}&\cdots&A_{n-2}
	\end{array}
	\right]X=A_{n-1},
	\end{equation}
	\item If the system $(\ref{eq1})$ has solutions, then $A_{n}$ is dependent on the other columns of matrix $A$. Hence, we remove the column $A_{n}$ from the set of generators of $ Col(A) $, and  $colrank(A) \leq n-1$. Now, we can consider the new system as follows.
	\begin{equation}      
	\label{eq3}
	\left[
	\begin{array}{c|c|c|c}
	A_{1}&A_{2}&\cdots&A_{n-2}
	\end{array}
	\right]X=A_{n-1},
	\end{equation}
\end{enumerate}
Next, we check both cases $1$ and $2$ for the systems $(\ref{eq2})$ or $(\ref{eq3})$ depending on which one has happened. We repeat this until we get a linear system whose vector is $A_{1}$ and whose matrix is the independent columns of matrix $A$ which are obtained from the procedure. Finally, we check both cases $1$ and $2$ for this last system. At this point, we can completely determine the independent columns and the column rank of $A$.
\begin{remark}
	Note that we can obtain the row rank of $ A $ by applying the above method to the matrix $ A^{T} $ and finding the column rank of $ A^{T} $, i.e., $ rowrank(A)=colrank(A^{T})$.  
\end{remark}
\begin{ex}
	Consider the following matrix $A \in M_{4 \times 5}(S)$;
	\[
	\left[
	\begin{array}{ccccc}
	4&-4&2&3&3\\
	5&7&7&2&6\\
	10&12&12&8&11\\
	4&-3&2&3&3
	\end{array}
	\right].
	\]
	The normalized matrix $ \tilde{A}=(\tilde{a}_{ij}) $ is
	\[
	\left[
	\begin{array}{ccccc}
	-\frac{7}{4}&-7&-\frac{15}{4}&-1&-\frac{11}{4}\\
	-\frac{3}{4}&4&\frac{5}{4}&-2&\frac{1}{4}\\
	\frac{17}{4}&9&\frac{25}{4}&4&\frac{21}{4}\\
	-\frac{7}{4}&-6&-\frac{15}{4}&-1&-\frac{11}{4}
	\end{array}
	\right];
	\]
	where the $ j$-th column of $ \tilde{A} $ is defined as $ \tilde{A}_{j}= A_{j} - \hat{A}_{j} $, for any $ 1 \leq j\leq 5 $ with $ \hat{A}_{1}= \frac{23}{4},~ \hat{A}_{2}= 3,~ \hat{A}_{3}= \frac{23}{4},~ \hat{A}_{4}= 4,~ \hat{A}_{5}= \frac{23}{4}$.
	In order to determine the column rank of the matrix $ A $, we investigate the linear dependence of each column on other columns of $ A $. First, we solve the following linear system by normalization method:
	\begin{equation}   
	\label{eq4}
	\left[
	\begin{array}{cccc}
	4&-4&2&3\\
	5&7&7&2\\
	10&12&12&8\\
	4&-3&2&3
	\end{array}
	\right]
	\left[
	\begin{array}{c}
	x_{1}\\
	x_{2}\\
	x_{3}\\
	x_{4}
	\end{array}
	\right]=
	\left[
	\begin{array}{c}
	3\\
	6\\
	11\\
	3
	\end{array}
	\right].
	\end{equation}
	By Definition ~\ref{normmethod}, associated with the system $ (\ref{eq4})$, we construct the matrix $ Q_{1}=((Q_{1})_{ij}) \in M_{4}(S) $, where $ (Q_{1})_{ij}=\tilde{a}_{i5} - \tilde{a}_{ij} $, for $ 1 \leq i,j \leq 4 $:
	\[
	Q_{1}=\left[
	\begin{array}{cccc}
	\boxed{-1}&\frac{17}{4}&1&\boxed{-\frac{7}{4}}\\
	1&\boxed{-\frac{15}{4}}&\boxed{-1}&\frac{9}{4}\\
	1&\boxed{-\frac{15}{4}}&\boxed{-1}&\frac{5}{4}\\
	\boxed{-1}&\frac{13}{4}&1&\boxed{-\frac{7}{4}}
	\end{array}
	\right]
	\]
	Due to Theorem~\ref{normthm}, the system $ (\ref{eq4}) $ has solutions, therefore, the fifth column of $ A $ is linearly dependent on the other columns and it must be removed from the set of generators of $ Col(A) $. Hence, $ colrank(A) \leq 4 $.\\
	Next, we repeat this for the following system:
	\begin{equation}
	\label{eq5}     
	\left[
	\begin{array}{cccc}
	4&-4&2\\
	5&7&7\\
	10&12&12\\
	4&-3&2
	\end{array}
	\right]
	\left[
	\begin{array}{c}
	x_{1}\\
	x_{2}\\
	x_{3}
	\end{array}
	\right]=
	\left[
	\begin{array}{c}
	3\\
	2\\
	8\\
	3
	\end{array}
	\right].
	\end{equation}
	The  following matrix $Q_{2}=((Q_{2})_{ij}) \in M_{4 \times 3}(S) $, associated with the system $ (\ref{eq5}) $, can be defined, where $ (Q_{2})_{ij}=\tilde{a}_{i4} - \tilde{a}_{ij} $, for $ 1 \leq i \leq 4 $ and $ 1 \leq j \leq 3$ :
	\[
	Q_{2}=\left[
	\begin{array}{ccc}
	\frac{3}{4}&6&\frac{11}{4}\\
	\boxed{-\frac{5}{4}}&\boxed{-6}&\boxed{-\frac{13}{4}}\\
	-\frac{1}{4}&-5&-\frac{9}{4}\\
	\frac{3}{4}&5&\frac{11}{4}
	\end{array}
	\right]
	\]
	By Theorem~\ref{normthm}, the system $ (\ref{eq5})$ has no solution. Thus, $ A_{4} $ is linearly independent and it cannot be removed from the set of generators of $ Col(A) $ and we consider the following system by setting $ A_{4} $ as the first column of coefficient matrix:
	\begin{equation}
	\label{eq6}
	\left[
	\begin{array}{ccc}
	3&4&-4\\
	2&5&7\\
	8&10&12\\
	3&4&-3
	\end{array}
	\right]
	\left[
	\begin{array}{c}
	x_{1}\\
	x_{2}\\
	x_{3}
	\end{array}
	\right]=
	\left[
	\begin{array}{c}
	2\\
	7\\
	12\\
	2
	\end{array}
	\right].
	\end{equation}
	Here, we can define the matrix $ Q_{3}=(\tilde{a}_{i3} -\tilde{a}_{ij}) \in M_{4 \times 3}(S) $, where $ j=4,1,2$ (respectively) and $ 1 \leq i \leq 4 $ :
	\[
	Q_{3}=\left[
	\begin{array}{ccc}
	\boxed{-\frac{11}{4}}&\boxed{-2}&\frac{13}{4}\\
	\frac{13}{4}&2&\boxed{-\frac{11}{4}}\\
	\frac{9}{4}&2&\boxed{-\frac{11}{4}}\\
	\boxed{-\frac{11}{4}}&\boxed{-2}&\frac{9}{4}
	\end{array}
	\right]
	\]
	Since every row of $ Q_{3} $ contains at least one column minimum element, by Theorem~\ref{normthm}, we conclude that the system $(\ref{eq6})$ has solutions and the column $ A_{3} $ should be removed from the set of generators of $ Col(A) $ and therefore $ colrank(A) \leq 3$. We now consider the following system to investigate linear dependence of the column $ A_{2} $ on the columns $ A_{1}$ and $ A_{4} $:
	\begin{equation}
	\label{eq7}
	\left[
	\begin{array}{cc}
	3&4\\
	2&5\\
	8&10\\
	3&4
	\end{array}
	\right]
	\left[
	\begin{array}{c}
	x_{1}\\
	x_{2}
	\end{array}
	\right]=
	\left[
	\begin{array}{c}
	-4\\
	7\\
	12\\
	-3
	\end{array}
	\right].
	\end{equation}
	Similarly, associated with the system $ (\ref{eq7}) $ we can define the matrix $ Q_{4}=(\tilde{a}_{i2}- \tilde{a}_{ij}) \in M_{4 \times 2}(S) $, where $ j= 4,1 $ (respectively) and $ 1 \leq i \leq 4 $:
	\[
	Q_{4}=\left[
	\begin{array}{cc}
	\boxed{-6}&\boxed{-\frac{21}{4}}\\
	6&\frac{19}{4}\\
	5&\frac{19}{4}\\
	-5&-\frac{17}{4}
	\end{array}
	\right].
	\]
	Clearly, the system $ (\ref{eq7})$ has no solution, so $ A_{2} $ is linearly independent and we can consider the following system by setting $ A_{2} $ as the first column of coefficient matrix:
	\begin{equation}     
	\label{eq8}
	\left[
	\begin{array}{cc}
	-4&3\\
	7&2\\
	12&8\\
	-3&3
	\end{array}
	\right]
	\left[
	\begin{array}{c}
	x_{1}\\
	x_{2}
	\end{array}
	\right]=
	\left[
	\begin{array}{c}
	4\\
	5\\
	10\\
	4
	\end{array}
	\right].
	\end{equation}
	By the normalization method, we can consider the matrix $ Q_{5} =(\tilde{a}_{i1} -\tilde{a}_{ij}) \in M_{4 \times 2}(S) $, where $ 1\leq i \leq 4 $ and $ j= 2, 4$ :
	\[
	Q_{5}=\left[
	\begin{array}{cc}
	\frac{21}{4}&\boxed{-\frac{3}{4}}\\
	\boxed{-\frac{19}{4}}&\frac{5}{4}\\
	\boxed{-\frac{19}{4}}&\frac{1}{4}\\
	\frac{17}{4}&\boxed{-\frac{3}{4}}
	\end{array}
	\right].
	\]
	Obviously, the system $ (\ref{eq8}) $ has solutions. As such, the column $ A_{1} $ is linearly dependent and by removing it from the set of generators of $ Col(A) $, we can conclude that $ colrank(A)= 2$.
\end{ex}
\section{Row-Column Analysis of the System $ AX=b$ }
Consider the system of linear equations $AX=b$, where $A \in M_{m\times n}(S) $, $ b \in S^{m} $ and  $ X $ is an unknown column vector of size $n$. In order to simplify solving the system and determining the degrees of freedom, we reduce the order of the system through a row-column analysis to obtain a new system with fewer equations and variables. We call this new system the reduced system. Note further that the row-column analysis technique is based on the column rank and the row rank of the system matrix, which are determined in the previous section.
\subsection{Column analysis of the system $ AX=b$}
Suppose that $ A_{c_{1}},A_{c_{2}}, \cdots ,A_{c_{n}} $ are the columns of matrix $ A $. Without loss of generality, we can assume that $ A_{c_{1}},A_{c_{2}}, \cdots ,A_{c_{k}}$ are linearly independent and the other columns are linearly dependent on them. The linear system $ AX=b $ can be written as follows.
\begin{center}
	\[
	\left[
	\begin{array}{c|c|c|c|c|c|c}
	A_{c_{1}}&A_{c_{2}}&\cdots&A_{c_{k}}&A_{c_{k+1}}&\cdots&A_{c_{n}}
	\end{array}
	\right]
	\left[
	\begin{array}{c}
	x_{1}\\
	x_{2}\\
	\vdots\\
	x_{k}\\
	x_{k+1}\\
	\vdots\\
	x_{n}
	\end{array}
	\right]
	=
	\left[
	\begin{array}{c}
	b_{1}\\
	b_{2}\\
	\vdots\\
	b_{m}
	\end{array}
	\right].
	\]	
\end{center}
We can rewrite the system as
\begin{equation}
\label{eq9}
\max (A_{c_{1}} + x_{1},A_{c_{2}} + x_{2}, \cdots, A_{c_{k}} + x_{k}, A_{c_{k+1}} + x_{k+1}, \cdots, A_{c_{n}} + x_{n} ) =
\left[
\begin{array}{c}
b_{1}\\
b_{2}\\
\vdots\\
b_{m}
\end{array}
\right].
\end{equation}
There exist scalars $\eta_{ij} \in S $ for every $ 1\leq i\leq k $ and $ k+1\leq j\leq n $ such that 
\begin{equation}      
\label{eq10}
A_{c_{j}} = \max(A_{c_{1}} + \eta_{1j},A_{c_{2}} + \eta_{2j}, \cdots, A_{c_{k}} + \eta_{kj}) .
\end{equation}
By replacing $ (\ref{eq10}) $ in $ (\ref{eq9}) $ we have:
\begin{center}
	$\max (A_{c_{1}} + x_{1},A_{c_{2}} + x_{2}, \cdots, A_{c_{k}} + x_{k}, \max(A_{c_{1}} + \eta_{1(k+1)},A_{c_{2}} + \eta_{2(k+1)}, \cdots, A_{c_{k}} + \eta_{k(k+1)}) + x_{k+1}, \cdots,  \max(A_{c_{1}} + \eta_{1n},A_{c_{2}} + \eta_{2n}, \cdots, A_{c_{k}} + \eta_{kn}) + x_{n} ) =
	\left[
	\begin{array}{c}
	b_{1}\\
	b_{2}\\
	\vdots\\
	b_{m}
	\end{array}
	\right]. $
\end{center}\
Due to the distributivity of $ ``+"$ over $`` \max" $, the following equality is obtained:  
\begin{center}
	$\max [A_{c_{1}} +\max(x_{1}, \eta_{1(k+1)} + x_{k+1},\cdots,\eta_{1n} + x_{n}),A_{c_{2}} +\max(x_{2}, \eta_{2(k+1)} + x_{k+1},\cdots,\eta_{2n} + x_{n}),\cdots,A_{c_{k}} +\max(x_{k}, \eta_{k(k+1)} + x_{k+1},\cdots,\eta_{kn} + x_{n})]=
	\left[
	\begin{array}{c}
	b_{1}\\
	b_{2}\\
	\vdots\\
	b_{m}
	\end{array}
	\right]. $
\end{center}
Now, we can rewrite this system as
\begin{center}
	$\max (A_{c_{1}} + y_{1},A_{c_{2}} + y_{2}, \cdots, A_{c_{k}} + y_{k} ) =
	\left[
	\begin{array}{c}
	b_{1}\\
	b_{2}\\
	\vdots\\
	b_{m}
	\end{array}
	\right],$
\end{center}
where
\begin{equation} 
\label{eq11}
y_{i}= \max(x_{i}, \eta_{i(k+1)} + x_{k+1},\cdots,\eta_{in} + x_{n}),
\end{equation}
for every $ 1\leq i\leq k $. As such, the number of variables decreases from $ n $ to $ k $.
Next, we show that the existence of solutions of the system $ AX=b $ depends on the row rank of $ A $. Assume that $ Y^{*} =(y^{*}_{i})_{i=1}^{k} $ is the maximal solution of the system: 
\begin{center}
	\[
	\left[
	\begin{array}{c|c|c|c}
	A_{c_{1}}&A_{c_{2}}&\cdots&A_{c_{k}}
	\end{array}
	\right]
	\left[
	\begin{array}{c}
	y^{*}_{1}\\
	y^{*}_{2}\\
	\vdots\\
	y^{*}_{k}
	\end{array}
	\right]
	=
	\left[
	\begin{array}{c}
	b_{1}\\
	b_{2}\\
	\vdots\\
	b_{m}
	\end{array}
	\right].
	\]
\end{center}
Hence, the equalities $ (\ref{eq11})$ imply the system $ AX=b $ should have solutions $ x_{j}\leq \min (y^{*}_{1} -\eta_{1j},y^{*}_{2} -\eta_{2j}, \cdots,y^{*}_{k} -\eta_{kj}) $ for every $ k+1 \leq j \leq n $ and $ x_{i} = y^{*}_{i}  $ for every $ 1 \leq i \leq k $.
\subsection{Row analysis of the system $ AX =b $}
Consider the system $ AX =b $ in the form of
\begin{equation}
\label{eq12}
\left[
\begin{array}{c}
A_{r_{1}}\\ \hline
A_{r_{2}}\\ \hline
\vdots\\ \hline
A_{r_{h}}\\ \hline
A_{r_{h+1}}\\ \hline
\vdots \\\hline
A_{r_{m}}\\
\end{array}
\right]
\left[
\begin{array}{c}
x_{1}\\
x_{2}\\
\vdots\\
x_{n}
\end{array}
\right]
=
\left[
\begin{array}{c}
b_{1}\\
b_{2}\\
\vdots\\
b_{h}\\
b_{h+1}\\
\vdots \\
b_{m}
\end{array}
\right]
\end{equation}
where $ A_{r_{i}} $ is the $ i $-th row of the matrix $ A $, for every $ 1 \leq i \leq m $. Without loss of generality, the rows $ A_{r_{1}}, A_{r_{2}}, \cdots, A_{r_{h}} $ can be considered linearly independent rows of $A$ and the other rows $ A_{r_{i}} $, $ h+1 \leq i \leq m $ are linear combinations of them. Consequently, there exist scalars $ \xi _{ij} \in S $ for every $ 1 \leq j \leq h $ and $ h+1 \leq i \leq m $ such that:
\begin{equation}
\label{eq13}
A_{r_{i}}= \max (A_{r_{1}} + \xi_{i1}, A_{r_{2}} + \xi_{i2}, \cdots, A_{r_{h}} + \xi_{ih}) 
\end{equation}
, for every $ h+1 \leq i \leq m $. We can now rewrite the system of equations $(\ref{eq12}) $ as 
\begin{center}
	$A_{r_{i}}
	\left[
	\begin{array}{c}
	x_{1}\\
	x_{2}\\
	\vdots\\
	x_{n}
	\end{array}
	\right]= b_{i} $, for any $ 1 \leq i \leq m$,
\end{center} 
which can become the $ h $-equation system:
\begin{center}
	$A_{r_{j}}
	\left[
	\begin{array}{c}
	x_{1}\\
	x_{2}\\
	\vdots\\
	x_{n}
	\end{array}
	\right]= b_{j} $, for any $ 1 \leq j \leq h$.
\end{center}
We now obtain the row-reduced system with $h$ equations.\\
Note that in the process of reducing the system $AX=b$, it does not matter which of the row or column analysis is first applied to the system.\\
This argument leads us to investigate the existence of solutions of the linear system $ AX=b $.
\begin{theorem}\label{SEOVER}
	Let $A \in M_{m \times n}(S)$. The system $AX=b$ has solutions if and only if its reduced system, $\overline{A}Y=\overline{b}$, has solutions. 
\end{theorem}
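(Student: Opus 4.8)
The plan is to prove Theorem~\ref{SEOVER} by factoring the passage from $A$ to its reduced matrix $\overline{A}$ into the two elementary operations studied in Sections~6.1 and~6.2, namely the deletion of a linearly dependent column and the deletion of a linearly dependent row, and showing that each one preserves the solvability of the system. Since, as noted above, the order in which the row analysis and the column analysis are carried out is immaterial, it suffices to establish preservation of solvability for a single column deletion and for a single row deletion and then iterate; composing finitely many such steps gives the equivalence between $AX=b$ and $\overline{A}Y=\overline{b}$.

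For a single column deletion, reorder the columns so that $A_{c_1},\dots,A_{c_k}$ are linearly independent and $A_{c_{k+1}},\dots,A_{c_n}$ are dependent on them, with scalars $\eta_{ij}$ as in $(\ref{eq10})$. Here I would simply quote the computation of Section~6.1: substituting $(\ref{eq10})$ into $(\ref{eq9})$ and using distributivity of $+$ over $\max$ rewrites $AX=b$ as the $k$-column system $[A_{c_1}\mid\cdots\mid A_{c_k}]Y=b$ with $y_i$ given by $(\ref{eq11})$. If $X$ solves $AX=b$, then the vector whose entries are given by $(\ref{eq11})$ solves the column-reduced system; conversely, if the column-reduced system is solvable it has a maximal solution $Y^{*}$ by the remark following Theorem~\ref{normthm}, and then $x_i=y^{*}_i$ for $1\le i\le k$ together with any $x_j\le\min(y^{*}_1-\eta_{1j},\dots,y^{*}_k-\eta_{kj})$ for $k+1\le j\le n$ is a solution of $AX=b$, exactly as displayed at the end of Section~6.1. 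Thus solvability is equivalent across a column deletion.

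For a single row deletion, reorder the rows so that $A_{r_1},\dots,A_{r_h}$ are independent and $A_{r_{h+1}},\dots,A_{r_m}$ are dependent via the scalars $\xi_{ij}$ of $(\ref{eq13})$, and let $\overline{b}$ be the vector with entries $b_1,\dots,b_h$. The forward implication is immediate: if $X^{*}$ solves $AX=b$ then $A_{r_i}X^{*}=b_i$ for every $i$, hence in particular for $1\le i\le h$, so $X^{*}$ solves the row-reduced system. For the converse, let $X$ solve $A_{r_j}X=b_j$ for $1\le j\le h$; applying $(\ref{eq13})$ gives, for each dependent row $i$, $A_{r_i}X=\max_{j}(\xi_{ij}+A_{r_j}X)=\max_{j}(\xi_{ij}+b_j)$, so the only remaining point is to identify this value with $b_i$, that is, to verify the relation $b_i=\max_{j}(\xi_{ij}+b_j)$ for every dependent row $i$. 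This is the condition that makes the row reduction lossless, and establishing it — equivalently, checking that the entries of $b$ discarded in forming $\overline{b}$ are recovered from $\overline{b}$ through the same scalars $\xi_{ij}$ that express the dependent rows of $A$ in terms of the independent ones — is the one genuinely nontrivial step; the column side and the forward implication are only the bookkeeping already done in Sections~6.1 and~6.2. Once this consistency is in hand, the reconstructed $X$ solves $AX=b$, and iterating the column and row deletions, in either order, gives the stated equivalence.
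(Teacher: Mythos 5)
Your decomposition into a chain of single column deletions and single row deletions follows the paper's own route: the column half is exactly the computation of Section~6.1 (equations~(\ref{eq10})--(\ref{eq11})), and the row half rests on~(\ref{eq13}). The gap is precisely the step you flag and then defer: the consistency relation $b_i=\max_j(\xi_{ij}+b_j)$ for the dependent rows. In the converse direction (reduced solvable $\Rightarrow$ original solvable) this relation cannot be derived from the hypotheses at all, because it can fail while the reduced system is solvable. For instance, over $\mathbb{R}_{\max,+}$ let $A$ be the $2\times 1$ matrix with both entries $0$ and let $b=(0,1)^{T}$; the second row of $A$ equals $0+A_{r_1}$, so it is removed, $\overline{A}=[\,0\,]$ and $\overline{b}=[\,0\,]$, and $\overline{A}Y=\overline{b}$ has the solution $y=0$, yet $AX=b$ demands $x=0$ and $x=1$ simultaneously and has no solution. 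So the ``one genuinely nontrivial step'' you isolate is not merely nontrivial: unconditionally it is false, and the backward implication of the theorem only holds under the extra hypothesis that the discarded entries of $b$ satisfy $b_i=\max_j(\xi_{ij}+b_j)$, i.e.\ exactly the equalities~(\ref{eq14}).

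You should know that the paper's proof is in the same position: it derives~(\ref{eq14}) from the existence of a solution of $AX=b$ (your forward direction), and then passes to the reduced system only ``if the equalities~(\ref{eq14}) hold''; the unconditional converse is never established, and by the example above it cannot be. Your column-deletion step is correct in both directions (the maximal solution argument with $x_j\le\min_i(y^{*}_i-\eta_{ij})$ is exactly the end of Section~6.1), and your forward row step is correct; to complete the converse you must either add~(\ref{eq14}) as a hypothesis --- equivalently, perform the row reduction on the augmented matrix $[A\mid b]$ so that the same scalars $\xi_{ij}$ witness the dependence of the removed entries of $b$ --- or restrict the reduction to columns only, where the equivalence does hold by the argument you gave. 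As written, both your proposal and the paper prove ``original solvable $\Rightarrow$ reduced solvable'' and leave the reverse implication resting on an unproved (and in general false) consistency condition.
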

\begin{proof}
	Let $ colrank(A)=k$ and $rowrank(A)=h $. By applying row-column analysis on the system $AX=b$ and replacing $(\ref{eq13})$ in the $ m $-equation system $(\ref{eq12})$, we conclude that
	\begin{equation}   
	\label{eq14}
	b_{i}=
	A_{r_{i}}
	\left[
	\begin{array}{c}
	x_{1}\\
	x_{2}\\
	\vdots\\
	x_{n}
	\end{array}
	\right]= \max(b_{1} + \xi_{i1}, b_{2} + \xi_{i2}, \cdots,b_{h} + \xi_{ih}),
	\end{equation}
	for every $ h+1 \leq i \leq m $. If the equalities $(\ref{eq14})$ hold for every $ h+1 \leq i \leq m $, then we can reduce the system $ AX=b $ to the system $\overline{A}Y=\overline{b}$, where $ \overline{A}$ is the reduced $ h\times k $ matrix obtained from $ A $, $Y $ is an unknown vector of size $ k $, and $ \overline{b}$ is the reduced vector obtained from $ b $. Thus, the existence of solution $AX=b$ and $\overline{A}Y=\overline{b}$ depends on each other.
\end{proof}
The next theorem shows that this technique for row-column analysis simplifies the computation of degrees of freedom. 
\begin{theorem}
	Let $A \in M_{m \times n}(S)$. If the system $AX=b$ has solutions, then the number of degrees of freedom is $k-p$, where $k$ is the column rank of $A$ and $p$ is the number of columns of matrix $\overline{A}$ such that $\overline{b}$ is the linear combination of these columns.
\end{theorem}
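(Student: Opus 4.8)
The plan is to pass from $AX=b$ to its reduced system $\overline{A}Y=\overline{b}$ via the row--column analysis of this section, count the free variables there, and transfer the count back. First, since $AX=b$ has solutions, Theorem~\ref{SEOVER} gives that $\overline{A}Y=\overline{b}$ has solutions as well; in particular $\overline{b}\in Col(\overline{A})$. Recall how the reduction is built: the row analysis deletes the rows $A_{r_{h+1}},\dots,A_{r_{m}}$, which are forced by $(\ref{eq13})$ and hence satisfied automatically, so it does not change the solution set and therefore not the set of free variables either; the column analysis then replaces the $n$ variables $x_{1},\dots,x_{n}$ by the $k=colrank(A)$ variables $y_{1},\dots,y_{k}$ through the substitution $(\ref{eq11})$, with the scalars $\eta_{ij}$ coming from $(\ref{eq10})$. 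Thus, up to this reparametrization, computing $\mathcal{D}_{f}$ for $AX=b$ amounts to computing it for $\overline{A}Y=\overline{b}$.

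Next I would count the free variables of the reduced system directly. By construction $\overline{A}$ has exactly $k=colrank(A)$ columns, and since every column that was a linear combination of the others has been removed, these $k$ columns are linearly independent. By hypothesis $\overline{b}$ is a linear combination of $p$ of them, say $\overline{A}_{j_{1}},\dots,\overline{A}_{j_{p}}$; these $p$ columns are linearly independent and $\overline{b}\in Span(\overline{A}_{j_{1}},\dots,\overline{A}_{j_{p}})$, so by the definition of leading and free variables the variables $y_{j_{1}},\dots,y_{j_{p}}$ are leading while the remaining $k-p$ of the $y_{i}$ are free. Hence $\overline{A}Y=\overline{b}$ has exactly $k-p$ free variables.

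It then remains to transfer this to $AX=b$, i.e.\ to conclude $\mathcal{D}_{f}=k-p$. Here one uses that, by the column analysis, every solution $X$ of $AX=b$ yields via $(\ref{eq11})$ a solution $Y$ of $\overline{A}Y=\overline{b}$, and conversely a maximal solution $Y^{*}$ lifts back through $x_{i}=y_{i}^{*}$ for $1\le i\le k$ and $x_{j}\le\min_{i}(y_{i}^{*}-\eta_{ij})$ for $k+1\le j\le n$, so the $n-k$ eliminated columns have been absorbed into the $y_{i}$ and contribute no independent degree of freedom of their own, while a coordinate $y_{i}$ may be lowered below its maximal value precisely when the corresponding direction in the original system may. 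Matching free variables across the two sides then gives $\mathcal{D}_{f}=k-p$. I expect this last matching to be the main obstacle: one must argue carefully that the substitution $(\ref{eq11})$ is faithful for the purpose of counting free variables — that the dependent columns removed by the column analysis genuinely carry no freedom beyond what is already recorded by the $k-p$ free $y_{i}$'s — which requires tracking how the maximal solution and the set of lowerable coordinates behave under $(\ref{eq11})$, using the linear independence of the columns of $\overline{A}$ to rule out any collapse of distinct free directions.
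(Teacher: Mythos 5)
Your proposal takes essentially the same route as the paper: reduce to $\overline{A}Y=\overline{b}$ via Theorem~\ref{SEOVER}, observe that the reduced system has $k$ independent columns of which $p$ span $\overline{b}$, hence $k-p$ free variables, and carry the count back to $AX=b$ — indeed you spell out the $k-p$ count that the paper leaves implicit. The transfer step you single out as the main obstacle (faithfulness of the substitution (\ref{eq11}) for counting free variables) is exactly the point the paper's one-line proof asserts without argument when it passes from equivalence of solvability to ``their degrees of freedom are equal,'' so your attempt is no less complete than the published proof.
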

\begin{proof}
	By theorem~\ref{SEOVER}, the existence of solutions of the system $AX=b $ and $\overline{A}Y=\overline{b}$ depend on each other. As such, their degrees of freedom are equal, and the proof is complete.
\end{proof}
\begin{remark}
	Note that if $\overline{b}$ is not a linear combination of any column of $\overline{A}$, then the systems $AX=b$ and $\overline{A}Y=\overline{b}$ have no solutions.
\end{remark}
\section{Concluding Remarks}\label{remarks}
In this paper, applying the normalization method to a linear system, we presented a necessary and sufficient condition for the system to have a solution. In order to determine the degrees of freedom of a solvable system, the column rank and the row rank of an arbitrary matrix, some procedures were proposed as well. We also used a row-column analysis technique to reduce the order of the linear systems over tropical semirings and simplify their solution process.
	
	\newpage
\appendix
\begin{center}
	\includegraphics[scale=.7]{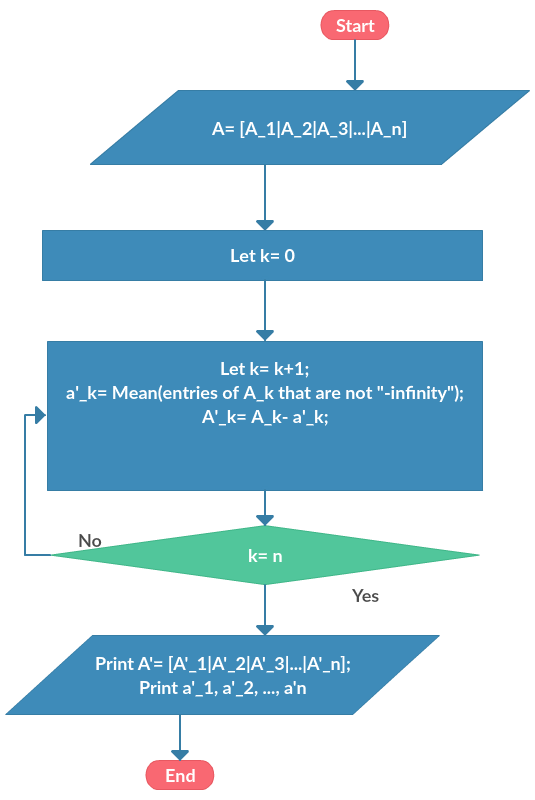}\\
	\title{\textbf{Figure 1}: Matrix Normalization}
\end{center}
\begin{center}
	\includegraphics[scale=.5]{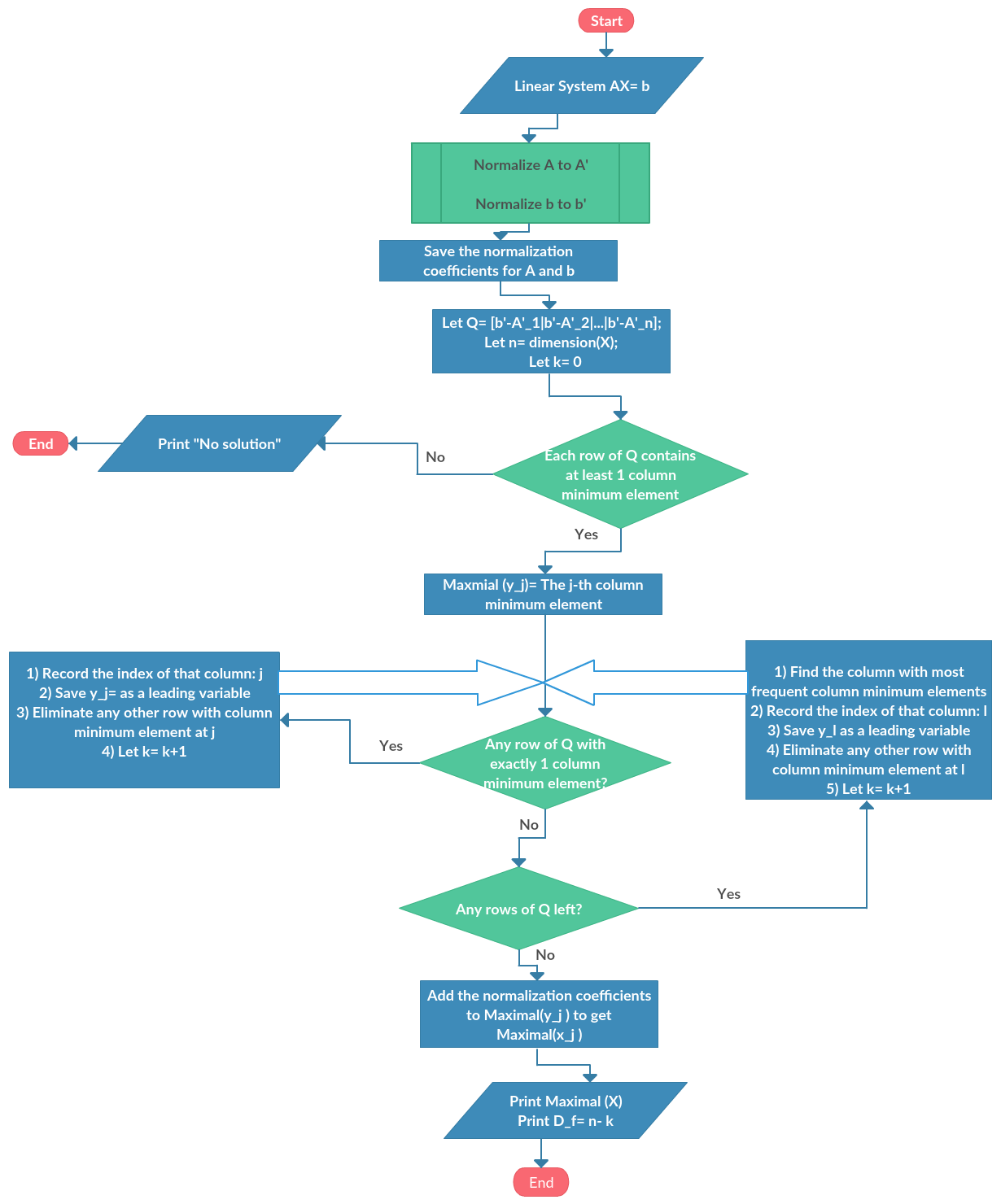}\\
	\title{\textbf{Figure 2}: Linear System Solution and Degrees of Freedom through Normalization}
\end{center}
\begin{center}
	\includegraphics[scale=.5]{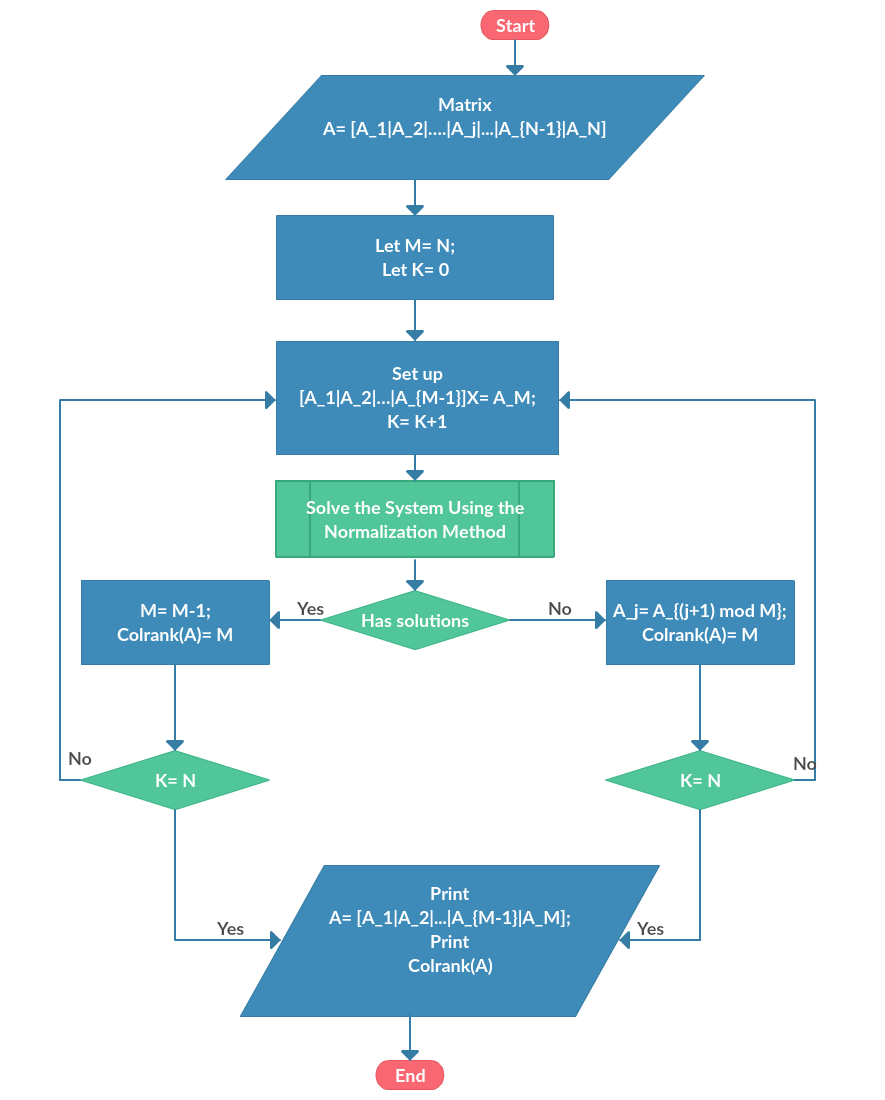}\\
	\title{\textbf{Figure 3}: Column Rank of A Matrix}
\end{center}
\end{document}